\newcommand{\bbR}{{ \mathbb{R} }}
\newtheorem{lemma}{Lemma}
\newtheorem{theorem}{Theorem}
\newtheorem{remark}{Remark}
\numberwithin{equation}{section}
\newenvironment{proof}{{\bf Proof.}}{\hfill\fbox{}\par\vspace{.2cm}}
\begin{document}
\title{On Liouville type theorems for the stationary MHD and the Hall-MHD systems in $\bbR^3$}
\renewcommand{\baselinestretch}{1.2}
\renewcommand{\theequation}{\thesection.\arabic{equation}}
\author{Dongho Chae\footnote{E-mail: dchae@cau.ac.kr } , Junha Kim\footnote{E-mail: jha02@cau.ac.kr }  and J\"{o}rg Wolf \footnote{E-mail: jwolf2603@gmail.com }  \,\,\\
Department of Mathematics,\\Chung-Ang University\\
 Seoul 06974, Republic of Korea \\
  }
\date{}
\maketitle
\begin{abstract}
In this paper we prove a Liouville type theorem for the stationary MHD and the stationary Hall-MHD systems. Assuming suitable growth condition at infinity for the 
mean oscillations for the potential functions, we show that the solutions are trivial.  These results generalize the previous results obtained by two of the current authors  in [6].  To prove our main theorems  we use a refined iteration argument. \\
\ \\
\noindent{\bf AMS Subject Classification Number:}
35Q30, 76D05, 76D03\\
  \noindent{\bf
keywords:} stationary magnetohydrodynamics equations, Hall-MHD system, Liouville type theorem 
\end{abstract}

\section{Introduction}
We consider the 3D stationary magnetohydrodynamics equations:
\begin{equation}\label{mhd_eq}
	\left\{
	\begin{array}{ll}
	-\Delta u + (u \cdot \nabla)u = -\nabla P + (B \cdot \nabla )B,\\
	-\Delta B + ( u\cdot \nabla)B = (B \cdot \nabla)u,\\
	\mathrm{div}\, u = \mathrm{div}\, B =0,
	\end{array}
	\right.
\end{equation}
which physically describe the steady-state of electrically conducting fluids, for example, plasmas. Here, $u=u(x)=(u_1,u_2,u_3)$ is the velocity field of the fluid, $B=B(x)=(B_1,B_2,B_3)$ is the magnetic field, and $P=P(x)$ is the pressure of the fluid. When $B\equiv0$, the system reduces to the stationary Navier--Stokes system. 

As the Liouville type problem for the stationary Navier-Stokes equations has been studied extensively recently(see \cite{C14,CY13,CW16,CJL21,GW78,KNSS09,KPR15,KTW17,S16,S18,SW20}), there also are many works on the Liouville type  problem  on the MHD equations (see \cite{LN20,YX20,ZYQ20} and references therein). Here, we focus on the results under the assumptions  in terms of the potential functions. We let $\Phi \in L^1_{loc}(\bbR^3;\bbR^{3\times 3})$ and $\Psi \in L^1_{loc}(\bbR^3;\bbR^{3\times 3})$ be the potential functions for the vector fields $u \in L^1_{loc}(\bbR^3)$ and $B \in L^1_{loc}(\bbR^3)$ respectively, if $\mathrm{div}\, \Phi = u$ and $\mathrm{div}\, \Psi = B$. In \cite{S16}, Seregin proved Liouville type theorems for the stationary Navier-Stokes equations under  the assumptions for the potential $\Phi \in BMO(\bbR^3)$ and for the velocity $v \in L^6(\bbR^3)$ (later in \cite{S18}, the velocity condition is dropped). After that, Chae and Wolf improved the previous result (see \cite{CW19}). For the MHD equations, Schulz obtained Liouville theorem in \cite{Sc19} under the conditions similar  to those in \cite{S16}. Recently, Chae and Wolf in \cite{CW21} proved the  similar theorem but under relaxed conditions than \cite{Sc19}. Moreover, they also obtained similar Liouville type result for the 3D Hall-MHD system
\begin{equation}\label{hall_mhd_eq}
	\left\{
	\begin{array}{ll}
	-\Delta u + (u \cdot \nabla)u = -\nabla P + (B \cdot \nabla )B,\\
	-\Delta B + ( u\cdot \nabla)B = (B \cdot \nabla)u + \nabla \times ((\nabla \times B ) \times B),\\
	\mathrm{div}\, u = \mathrm{div}\, B =0.
	\end{array}
	\right.
\end{equation}
We note that the system \eqref{hall_mhd_eq} is a physically  important  generalization of \eqref{mhd_eq}(see e.g. \cite{CDL14} for preliminary mathematical results and physical motivation of the Hall-MHD system). \\
\ \\
Our aim in this paper is to generalize the  main results in \cite{CW19, CW21}(and hence \cite{Sc19}).

\begin{theorem}\label{thm1}
	Let $(u,B,P)$ be a smooth solution to \eqref{mhd_eq}. Assume that there exist $\Phi,\Psi \in C^\infty(\bbR^3 ; \bbR^{3\times 3})$ such that $\nabla \cdot \Phi = u$, $\nabla \cdot \Psi = B$, and 
	\begin{equation}\label{AS}
		\bigg( \frac 1 {| B(r)|} \int_{B(r)} \big|\Phi - \Phi_{B(r)} \big|^s \,\mathrm{d}x \bigg)^{\frac 1s} + \bigg( \frac 1 {| B(r) |} \int_{B(r)} \big|\Psi - \Psi_{B(r)} \big|^s \,\mathrm{d}x \bigg)^{\frac 1s} \leq C r ^ {\frac 13 - \frac 1s}, \qquad r > 1
	\end{equation}
	for some $3 < s \leq 6$. Then $u \equiv B \equiv 0$.
\end{theorem}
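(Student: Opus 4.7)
The plan is to derive a Caccioppoli-type inequality for the local Dirichlet energy
\[
E(\rho) := \int_{B(\rho)} \big(|\nabla u|^2 + |\nabla B|^2\big)\,\mathrm{d}x
\]
and then iterate it on a sequence of growing balls so as to force $E(\rho)=0$ for every $\rho>0$. Once the Dirichlet energy vanishes, $u$ and $B$ must be constant; the mean-oscillation condition \eqref{AS} then forbids any nonzero constants, so $u\equiv B\equiv 0$.

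The starting point is the local energy identity, obtained by taking the inner product of the first equation of \eqref{mhd_eq} with $u$, the second with $B$, summing, and using $\mathrm{div}\,u=\mathrm{div}\,B=0$:
\[
|\nabla u|^2 + |\nabla B|^2 = \tfrac12\Delta\big(|u|^2+|B|^2\big) - \mathrm{div}\Big(\big(\tfrac{|u|^2+|B|^2}{2}+P\big)u - (u\cdot B)B\Big).
\]
I would multiply by a cutoff $\eta^{2k}\in C_c^\infty(B(r))$ with $\eta\equiv 1$ on $B(r/2)$ and $|\nabla^j\eta|\le Cr^{-j}$, integrate over $\bbR^3$, and move derivatives onto $\eta^{2k}$ via integration by parts. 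What results is an inequality bounding $E(r/2)$ by integrals of $(|u|^2+|B|^2+|P|)|u|$ and of $|u||B|^2$, weighted by $|\nabla\eta^{2k}|\le Cr^{-1}\eta^{2k-1}$ and localized to the annulus $B(r)\setminus B(r/2)$.

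The crucial use of the potentials comes next. Since $u$ and $B$ are divergence-free, I may replace one factor of $u$ in each cubic term by $\mathrm{div}(\Phi-\Phi_{B(r)})$, and similarly one factor of $B$ by $\mathrm{div}(\Psi-\Psi_{B(r)})$, and then integrate by parts once more to shift the derivative onto the remaining factors and the cutoff. Hypothesis \eqref{AS} then enters via H\"older's inequality with exponents $(s,q,2)$ satisfying $\tfrac1s+\tfrac1q+\tfrac12=1$, i.e.\ $q=\tfrac{2s}{s-2}\in[3,6)$, together with a Sobolev--Poincar\'e inequality to bound $\|u\|_{L^q(B(r))}$ and $\|B\|_{L^q(B(r))}$ by $E(r)^{1/2}$ up to lower-order contributions of the means. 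The pressure is handled separately by viewing $-\Delta P=\partial_i\partial_j(u_iu_j-B_iB_j)$ as an elliptic equation and invoking a local Calder\'on--Zygmund estimate for $P-P_{B(r)}$ in terms of $\|u\|_{L^3(B(r))}^2+\|B\|_{L^3(B(r))}^2$. A careful arithmetic of the resulting exponents shows that the prefactor $r^{1/3-1/s}$ from \eqref{AS}, combined with the $r^{-1}$ from $\nabla\eta^{2k}$ and the volume and Sobolev scalings, yields a refined inequality of the form
\[
E(r/2) \le \tfrac12 E(r) + C\,r^{-\kappa}\big(E(r)+1\big)^{\gamma}
\]
with $\kappa=\kappa(s)>0$ and $\gamma=\gamma(s)\ge1$ throughout the whole range $3<s\le 6$.

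The iteration step then runs along a dyadic sequence of radii. Starting from an a priori polynomial bound on $E(\rho)$ obtained by a crude version of the above argument, one feeds it back into the refined Caccioppoli inequality; the genuine gain $r^{-\kappa}$ beats the polynomial at each step, driving $E$ to $0$ on every ball. The principal obstacle is the pressure term, since \eqref{AS} gives no direct control on $P$: extracting the necessary bound on $P-P_{B(r)}$ from the elliptic equation while keeping the H\"older and Sobolev exponents consistent across the entire range $3<s\le 6$ is precisely the ``refined iteration argument'' mentioned in the abstract, and this is where essentially all of the technical work will lie.
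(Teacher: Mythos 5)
Your overall architecture (local energy identity, integration by parts against the potentials, pressure via a local elliptic estimate, iteration over dyadic radii) matches the paper's, but the pivotal inequality you claim, $E(r/2)\le \tfrac12 E(r)+C\,r^{-\kappa}\big(E(r)+1\big)^{\gamma}$ with $\kappa>0$, is not what the exponent arithmetic produces, and the proof does not close as you describe. Under \eqref{AS} one has $\|\Phi-\Phi_{B(r)}\|_{L^s(B(r))}\le Cr^{1/3+2/s}$, so after replacing one factor of $u$ by $\mathrm{div}(\Phi-\Phi_{B(r)})$ in the cubic term, H\"older with exponents $(s,q,2)$, $q=2s/(s-2)$, and Sobolev--Poincar\'e give a contribution of order $r^{-1}\cdot r^{1/3+2/s}\cdot r^{3/q-1/2}\,\|\nabla u\|_{L^2(B(r))}^2=r^{1/3-1/s}\|\nabla u\|_{L^2(B(r))}^2$; since $s>3$ the prefactor $r^{1/3-1/s}$ \emph{grows}, so this term can be neither absorbed nor made to decay. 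This is precisely why the paper routes the cubic term through the Caccioppoli-type interpolation \eqref{C2}, which bounds $\|\psi^3 u\|_{L^3}^3$ by $R\|\psi\nabla u\|_{L^2}^{18/(s+6)}$ plus lower-order terms: the exponent $18/(s+6)<2$ lets Young's inequality absorb it into $\epsilon E(R)$ at the price of an \emph{additive constant} (after taking $R=2\rho$), not a decaying one. The honest outcome of the first stage is therefore only $E(\rho)\le \epsilon E(2\rho)+C(\epsilon)$, i.e.\ finiteness \eqref{EI} of the global Dirichlet integral --- not its vanishing.

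The passage from $E<\infty$ to $E\equiv 0$ is a separate, essential step that your proposal omits. The paper exploits that $\nabla u,\nabla B\in L^2(\bbR^3)$ forces $\|\nabla u\|_{L^2(B(4r)\setminus B(r/2))}\to 0$, feeds this back into \eqref{C2} with an \emph{annular} cutoff to obtain the averaged decay \eqref{L3o1} of $r^{-1}\int_{B(2r)\setminus B(r)}(|u|^3+|B|^3)\,\mathrm{d}x$, sums dyadically to get \eqref{L3O1}, and only then reruns the energy identity with $\varphi=\zeta_{r,2r}$: all right-hand terms are then supported in the annulus $B(2r)\setminus B(r)$ and vanish as $r\to\infty$. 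Without this intermediate tail-decay mechanism (or a genuine replacement for it) your dyadic iteration has no decaying quantity to iterate on. Your final remark --- that \eqref{AS} rules out nonzero constants --- is fine (it follows from the estimate $|B_\varphi|\le Cr^{-2/3-1/s}$ of Lemma~\ref{LEM2}), but the core mechanism that kills the Dirichlet energy is missing.
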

\begin{remark}
	In the case  $B\equiv 0$ Theorem~\ref{thm1} reduces to  \cite[Theorem~1.1]{CW19}.
\end{remark}

\begin{theorem}\label{thm2}
	Let $(u,B,P)$ be a smooth solution to \eqref{hall_mhd_eq}. Assume that there exist $\Phi,\Psi \in C^\infty(\bbR^3 ; \bbR^{3\times 3})$ such that $\nabla \cdot \Phi = u$, $\nabla \cdot \Psi = B$, and \eqref{AS} for some $3<s\leq 6$. In addition,
	\begin{equation}\label{ASS}
		\bigg( \frac 1 {| B(r)|} \int_{B(r)} \big| B - B_{B(r)} \big|^p dx \bigg)^{\frac 1p} = o\Big(r^{( \frac{2s}3 + 1 )( \frac 13 - \frac 1p )}\Big) \qquad \mbox{as} \qquad r \rightarrow \infty
	\end{equation}
	for some $p>3$. Then $u \equiv B \equiv 0$.
\end{theorem}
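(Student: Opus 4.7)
The plan is to establish a Caccioppoli--type local energy inequality for \eqref{hall_mhd_eq} in which every error term is dominated by quantities controlled by \eqref{AS} and \eqref{ASS}, and then iterate in the spirit of \cite{CW21} to force $u$ and $B$ to vanish.

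Fix radii $1 < \rho < r < 2\rho$ and a cutoff $\zeta \in C_c^\infty(B(r))$ with $\zeta \equiv 1$ on $B(\rho)$ and $|\nabla^k \zeta| \lesssim (r-\rho)^{-k}$. Testing the momentum equation with $u\zeta^2$ and the magnetic equation with $B\zeta^2$, summing, and integrating by parts yields schematically
\[
\int \big(|\nabla u|^2 + |\nabla B|^2\big)\zeta^2\,dx \;\le\; \mathrm{(I)}_{\text{conv}} + \mathrm{(II)}_{\text{pres}} + \mathrm{(III)}_{\text{diff}} + \mathrm{(IV)}_{\text{Hall}}.
\]
For the convective term $\mathrm{(I)}$, I use $(u\cdot\nabla u)\cdot u = \tfrac12\nabla\cdot(|u|^2 u)$ and its analogue for the Lorentz force, transfer a derivative onto $\zeta$, and then integrate by parts once more via $u = \mathrm{div}\,\Phi$, $B = \mathrm{div}\,\Psi$, which replaces $u$ and $B$ in the outer factors by $\Phi - \Phi_{B(r)}$ and $\Psi - \Psi_{B(r)}$; H\"older combined with \eqref{AS} then yields a bound that decays in $r$. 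The pressure $\mathrm{(II)}$ reduces via $\mathrm{div}\,u = 0$ to $\int P\,u\cdot\nabla\zeta^2$, and $P$ is controlled using the representation $\Delta P = -\partial_i\partial_j(u_iu_j - B_iB_j)$ together with Calder\'on--Zygmund theory, exactly as in \cite{CW21}. The diffusion error $\mathrm{(III)}$ is standard.

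The main obstacle is the cubic Hall term $\mathrm{(IV)}_{\text{Hall}} = \int \nabla\times((\nabla\times B)\times B)\cdot B\zeta^2\,dx$. Using the orthogonality $((\nabla\times B)\times B)\cdot(\nabla\times B) = 0$, integration by parts reduces it to $\int((\nabla\times B)\times B)\cdot(\nabla\zeta^2 \times B)\,dx$, which in absolute value is bounded by $C(r-\rho)^{-1}\int_{B(r)}|\nabla B|\,|B|^2\,dx$. Writing $B = (B - B_{B(r)}) + B_{B(r)}$ and applying H\"older with exponents $(2,p,q)$ satisfying $1/2 + 1/p + 1/q = 1$, the $|\nabla B|$ factor is absorbed into the left--hand side, the oscillation part is controlled by \eqref{ASS}, and the remaining $L^q$--norm of $B$ is extracted from \eqref{AS} via Sobolev applied to $B = \mathrm{div}\,\Psi$. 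The exponent $(\tfrac{2s}3 + 1)(\tfrac13 - \tfrac1p)$ appearing in \eqref{ASS} is precisely calibrated so that the resulting Hall contribution is $o(1)$ as $r\to\infty$; no BMO--type bound on $\Psi$ alone would suffice, which is why the separate hypothesis on $B - B_{B(r)}$ is imposed.

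Putting everything together produces, for every $r/2 < \rho < r$,
\[
\int_{B(\rho)}\!\big(|\nabla u|^2 + |\nabla B|^2\big)\,dx \;\le\; \tfrac12\int_{B(r)}\!\big(|\nabla u|^2 + |\nabla B|^2\big)\,dx + \varepsilon(r),\qquad \varepsilon(r)\to 0.
\]
A standard refined iteration in $\rho$, of the type used in \cite{CW21}, followed by $r\to\infty$, forces $\nabla u \equiv \nabla B \equiv 0$, so that $u$ and $B$ are constant; the growth conditions on $\Phi, \Psi$ then identify both constants with zero and conclude the proof.
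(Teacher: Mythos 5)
There is a genuine gap at the heart of your argument: the claim that a single Caccioppoli inequality of the form $\int_{B(\rho)}(|\nabla u|^2+|\nabla B|^2)\,dx \le \tfrac12\int_{B(r)}(|\nabla u|^2+|\nabla B|^2)\,dx + \varepsilon(r)$ with $\varepsilon(r)\to 0$ can be extracted from \eqref{AS} and \eqref{ASS}. It cannot. The hypothesis \eqref{ASS} is a \emph{growth} condition, not a decay condition: it only gives $\| B - B_{B(r)}\|_{L^p(B(r))} = o\big(r^{\frac 3p + (\frac{2s}3+1)(\frac13-\frac1p)}\big)$, a quantity that tends to infinity for $p>3$. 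Moreover, your companion $L^q$ factor (with $\tfrac12+\tfrac1p+\tfrac1q=1$, so $q$ close to $6$) cannot be ``extracted from \eqref{AS} via Sobolev applied to $B=\mathrm{div}\,\Psi$'' --- Sobolev embedding controls a function by its gradient, not the divergence $B$ by the potential $\Psi$; the only available control of $\|B\|_{L^6(B(r))}$ is through Poincar\'e--Sobolev, i.e.\ through the Dirichlet integral $G(r)=\int_{B(r)}(|\nabla u|^2+|\nabla B|^2)\,dx$ itself (plus the mean-value bound $|B_{\varphi}|\lesssim r^{-\frac23-\frac1s}$ coming from \eqref{AS}). Consequently the Hall term is bounded by an expression of the form $(R-\rho)^{-1}\,G(R)^{\theta}\cdot(\text{polynomially growing factor})$ with $\theta<1$, and after Young's inequality the additive error grows like a power of $R$. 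A single application of the iteration lemma therefore yields only a \emph{polynomial growth bound} $G(r)\le C r^{\gamma(\frac{2s}3+4)(\frac p3-1)}$, not boundedness, and certainly not $G\to 0$.

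This is exactly why the paper runs a two-stage ``refined iteration.'' First it derives the polynomial bound just described (splitting into the cases $3<p\le 4$ and $p\ge 4$ and using the Caccioppoli-type Lemma~\ref{LEM3} together with Lemmas~\ref{LEM1}--\ref{LEM2} to handle the mean of $B$). Then it re-estimates the Hall term with a different H\"older splitting so that the smallness supplied by \eqref{ASS} multiplies $G(2r)$ rather than a free polynomial factor, arriving at $G(r)\le \kappa\big(\sup_{\tau\in[r,2r]}\Theta(\tau)^{\alpha}+r^{-\delta}\big)G(2r)+\kappa$. Iterating this inequality $n$ times along the dyadic radii $2^j r$ and inserting the stage-one polynomial bound on $G(2^n r)$ makes the leading term vanish as $n\to\infty$, which is what finally yields $\int_{\bbR^3}(|\nabla u|^2+|\nabla B|^2)\,dx<\infty$; only after that does the concluding step (with the $r^{-1/2}$ gain on the annulus) force the Dirichlet integral to zero. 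Your proposal is structurally sound up to and including the reduction of the Hall term to $(r-\rho)^{-1}\int|\nabla B|\,|B|^2$, but it is missing this bootstrap, which is the essential new ingredient of the theorem; without it the argument does not close.
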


\begin{remark}
	When $s=6$ and $p = 6$, the condition \eqref{ASS} reduces to
	\begin{equation*}
		\frac 1 {r^{8}} \int_{B(r)} \big| B - B_{B(r)} \big|^6 dx \rightarrow 0 \qquad \mbox{as} \qquad r \rightarrow \infty,
	\end{equation*}
	and we recover \cite[Theorem~1.3]{CW21}.
\end{remark}

\section{Preliminaries}

In this section, we introduce some notations and inequalities used throughout this paper. The following lemma is for Caccioppoli type inequalities which are essential for achieving our results. For the proof, we refer to \cite[Lemma~2.1 and Lemma~2.2]{CKW}.
\begin{lemma}\label{LEM3}
	Let $R>1$ and $f \in W^{1,2}(B(R);\bbR^3)$. For $0 < \rho <R$, let $\psi \in C^\infty_c(B(R))$ such that $0 \leq \psi \leq 1$ and $|\nabla \psi | \leq C/(R - \rho)$. Suppose that there exists $F \in W^{2,2}(B(R);\bbR^{3\times3})$ with $\nabla \cdot F = f$ and
	\begin{equation*}
	\bigg( \frac 1 {| B(r) |} \int_{B(r)} \big|F - F_{B(r)} \big|^s \,\mathrm{d}x \bigg)^{\frac 1s} \leq Cr^{\frac 13 - \frac 1s}, \qquad r>1
	\end{equation*}
	for some $3 < s \leq 6$. Then it holds
	\begin{equation}\label{C1}
	\| \psi^2 f \|_{L^2(B(R))}^2 \leq CR^{\frac {11}6 - \frac 1s} \| \psi \nabla f \|_{L^2(B(R))} + CR^{\frac {11}3 - \frac 2s} (R- \rho)^{-2}
	\end{equation}
	and
	\begin{equation}\label{C2}
	\| \psi^3 f \|_{L^3(B(R))}^3 \leq CR \| \psi \nabla f \|_{L^2(B(R))}^{\frac {18}{s+6}} + C R^{4-\frac 3s} (R - \rho)^{-3} + C R \big( (R-\rho)^{-1} \| \psi^2 f \|_{L^2(B(R))} \big)^{\frac {18}{s+6}}.
	\end{equation}
\end{lemma}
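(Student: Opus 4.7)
The plan is to prove both Caccioppoli inequalities by testing the identity $f = \nabla \cdot F$ against an appropriately chosen vector field $v$ compactly supported in $B(R)$, then exploiting the oscillation control on $F$ via H\"older, Sobolev, and Young. The core mechanism is
\begin{equation*}
\int v \cdot f \,\mathrm{d}x \;=\; \int v \cdot \nabla \cdot F \,\mathrm{d}x \;=\; -\int \nabla v : (F - F_{B(R)}) \,\mathrm{d}x,
\end{equation*}
in which $F$ may be replaced by $F - F_{B(R)}$ because $\nabla \cdot$ annihilates constants and $v$ vanishes on $\partial B(R)$. The hypothesis then enters through the direct estimate $\|F - F_{B(R)}\|_{L^s(B(R))} \leq CR^{1/3 + 2/s}$ and its reduced version $\|F - F_{B(R)}\|_{L^2(B(R))} \leq CR^{11/6 - 1/s}$ obtained by volume-H\"older.

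For \eqref{C1}, I would take $v = \psi^4 f$, so $\int v \cdot f = \|\psi^2 f\|_{L^2}^2$. Integration by parts produces two contributions: one involving $\psi^3 \nabla\psi \otimes f$ (where $|\nabla\psi| \leq C(R-\rho)^{-1}$) and one involving $\psi^4 \nabla f$, both paired with $F - F_{B(R)}$ and estimated using the $L^2$ bound above. A Young inequality on the $\nabla\psi$ term absorbs a small copy of $\|\psi^2 f\|_{L^2}^2$ back into the left-hand side, leaving the main term $CR^{11/6 - 1/s}\|\psi\nabla f\|_{L^2}$ and the error $CR^{11/3 - 2/s}(R-\rho)^{-2}$.

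For \eqref{C2}, take $v = \psi^9 |f| f$, so $\int v \cdot f = \|\psi^3 f\|_{L^3}^3$, and bound $|\nabla v| \leq C\psi^8|\nabla\psi||f|^2 + C\psi^9|f||\nabla f|$ to split the right-hand side as $J_1 + J_2$. For $J_1$, H\"older with exponents $(s/(s-1), s)$ isolates the $L^s$ norm of $F - F_{B(R)}$, and the bound $\psi^4 \leq \psi^3$ combined with a volume-H\"older step reduces the remaining factor to $\|\psi^3 f\|_{L^3}^2$ times a power of $R$; Young with exponents $(3,3/2)$ then produces the error $CR^{4-3/s}(R-\rho)^{-3}$ modulo an absorbable $\|\psi^3 f\|_{L^3}^3$. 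For $J_2$, H\"older with exponents $(2, s, 2s/(s-2))$ extracts $\|\psi\nabla f\|_{L^2}$ and $\|F - F_{B(R)}\|_{L^s}$, leaving $\|\psi^3 f\|_{L^{2s/(s-2)}}$; interpolate this norm between $L^3$ and $L^6$ with exponent $\lambda = 2 - 6/s \in [0,1]$, and control $\|\psi^3 f\|_{L^6}$ by Sobolev applied to the compactly supported $\psi^3 f$, yielding $\|\psi^3 f\|_{L^6} \leq C\|\psi\nabla f\|_{L^2} + C(R-\rho)^{-1}\|\psi^2 f\|_{L^2}$. Writing $A = \|\psi\nabla f\|_{L^2}$, $B = (R-\rho)^{-1}\|\psi^2 f\|_{L^2}$, using $(A+B)^{1-\lambda} \leq A^{1-\lambda} + B^{1-\lambda}$, and treating cross terms by AM--GM lead to $J_2 \leq CR^{1/3+2/s}\bigl(A^{2-\lambda} + B^{2-\lambda}\bigr)\|\psi^3 f\|_{L^3}^\lambda$. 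A final Young inequality with exponents $(3/\lambda, 3/(3-\lambda))$ turns $\|\psi^3 f\|_{L^3}^\lambda$ into an absorbable $\|\psi^3 f\|_{L^3}^3$ and produces $CR\cdot A^{18/(s+6)} + CR\cdot B^{18/(s+6)}$, where the identity $(2-\lambda)\cdot 3/(3-\lambda) = 18/(s+6)$ is precisely what forces the choice $\lambda = 2 - 6/s$.

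The main obstacle is the bookkeeping in \eqref{C2}: the interpolation exponent $\lambda$ must simultaneously lie in the valid range between $L^3$ and $L^6$ and produce the exponent $18/(s+6)$ after Young, all while matching the $R$-prefactor $CR$ through the identity $(1/3 + 2/s)\cdot 3/(3-\lambda) = 1$. A mild technicality is the non-smoothness of $v = \psi^9 |f|f$ at zeros of $f$; since $|f| \in W^{1,2}$ with $|\nabla|f|| \leq |\nabla f|$ a.e., one may either justify the integration by parts directly via approximation or use the standard substitution $|f| \rightsquigarrow (|f|^2 + \varepsilon)^{1/2}$ and pass to the limit $\varepsilon \to 0$.
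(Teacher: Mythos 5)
Your argument is correct: the duality step $\int v\cdot f = -\int \nabla v : (F-F_{B(R)})$ with $v=\psi^4 f$ for \eqref{C1} and $v=\psi^9|f|f$ for \eqref{C2}, followed by H\"older, Sobolev--interpolation with $\lambda = 2-6/s$, and Young, reproduces every exponent in the stated inequalities (I checked $\|F-F_{B(R)}\|_{L^2}\leq CR^{11/6-1/s}$, $(2-\lambda)\cdot\frac{3}{3-\lambda}=\frac{18}{s+6}$, and $(\frac13+\frac2s)\cdot\frac{3s}{s+6}=1$). The paper itself does not prove this lemma but defers to \cite[Lemmas 2.1--2.2]{CKW}, and your proof is precisely the standard Chae--Wolf potential-function argument used there, so no substantive comparison is needed.
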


To prove Theorem~\ref{thm2}, it is important to estimate the Hall term in \eqref{hall_mhd_eq}, $\nabla \times ((\nabla \times B ) \times B)$, carefully. Here, we collect some simple inequalities which will help it. For a measurable set $E \subset \bbR^3$ and a cut-off function $\varphi \in C^\infty_c(\bbR^3)$ with $0 \leq \varphi \leq 1$, we use the notations
\begin{equation*}
	f_E := \frac{1}{|E|} \int_E f \,\mathrm{d}x, \qquad f_{\varphi} := \bigg( \int_{\bbR^3} \varphi \,\mathrm{d}x \bigg)^{-1} \int_{\bbR^3} f \varphi \,\mathrm{d}x
\end{equation*}
for $f \in L^1_{loc} (\bbR^3;\bbR^3)$.
\begin{lemma}\label{LEM1}
	Let $R>1$ and $\varphi \in C^\infty_c(B(R))$ satisfy $0 \leq \varphi \leq 1$ and $\| \varphi \|_{L^1(B(R))} \geq c|B(R)|$ for some $c>0$. Then for a measurable set $E \in \bbR^3$ with $\mathrm{supp}\, (\varphi) \subset E \subset B(R)$, we have
	\begin{equation}\label{2-1}
		\| f - f_{\varphi} \|_{L^p(E)} \leq \frac {c+1}c \| f - f_E \|_{L^p(E)}, \qquad 1 \leq p \leq \infty.
	\end{equation}
\end{lemma}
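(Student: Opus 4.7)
The plan is to reduce the weighted average $f_\varphi$ to the ordinary average $f_E$ by the triangle inequality, since $f_E$ is the more familiar object and the quantity $f_E - f_\varphi$ is a constant that can be pulled out of the $L^p$ norm. Specifically, I would first write
\[
\|f - f_\varphi\|_{L^p(E)} \leq \|f - f_E\|_{L^p(E)} + |f_E - f_\varphi|\,|E|^{1/p},
\]
so everything reduces to bounding the scalar $|f_E - f_\varphi|$ by a suitable multiple of $|E|^{-1/p}\|f - f_E\|_{L^p(E)}$.

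The key identity is that, because $\mathrm{supp}(\varphi) \subset E$,
\[
f_\varphi - f_E = \frac{1}{\|\varphi\|_{L^1}}\int_E \varphi\,(f - f_E)\,\mathrm{d}x,
\]
which follows by adding and subtracting $f_E$ inside the integral defining $f_\varphi$ and using $\int_E \varphi\,\mathrm{d}x = \|\varphi\|_{L^1(B(R))}$. Applying H\"older's inequality together with the pointwise bound $0 \leq \varphi \leq 1$ and $\mathrm{supp}(\varphi)\subset E$ gives $\|\varphi\|_{L^{p'}(E)} \leq |E|^{1/p'}$, hence
\[
|f_\varphi - f_E| \leq \frac{|E|^{1/p'}}{\|\varphi\|_{L^1}}\,\|f - f_E\|_{L^p(E)}.
\]

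Finally, I would invoke the two structural hypotheses: $|E|\leq |B(R)|$ (since $E\subset B(R)$) and $\|\varphi\|_{L^1}\geq c|B(R)|$, which combine to give $|E|/\|\varphi\|_{L^1}\leq 1/c$. Substituting back into the triangle inequality yields the constant $1 + 1/c = (c+1)/c$, as required. There is no real obstacle here: the lemma is essentially a quantitative statement that any weighted average whose weight occupies a positive fraction of $B(R)$ is comparable to the uniform average on $E$. The only point requiring a brief sanity check is the endpoint $p=\infty$, where $\|\varphi\|_{L^{p'}(E)} = \|\varphi\|_{L^1}$ and the same computation still closes the estimate.
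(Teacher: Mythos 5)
Your proof is correct and follows essentially the same route as the paper's: the triangle inequality splitting off the constant $f_E - f_\varphi = -(f-f_E)_\varphi$, H\"older's inequality with $0\le\varphi\le 1$ to bound that constant, and the hypotheses $E\subset B(R)$ and $\|\varphi\|_{L^1}\ge c|B(R)|$ to obtain the factor $(c+1)/c$. No issues.
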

\begin{proof}
	Noting that $f_{\varphi} - \tau = (f - \tau)_{\varphi}$ for all $\tau \in \bbR$, we have
	\begin{equation*}
	\left\| f - f_{\varphi} \right\|_{L^p(E)} \leq \left\| f - f_E \right\|_{L^p(E)} + \left\| (f - f_E)_{\varphi} \right\|_{L^p(E)}.
	\end{equation*}
	Since H\"{o}lder's inequality implies that
	\begin{equation*}
		| (f - f_E)_{\varphi} | \leq \frac 1{c|B(R)|} \| f - f_E \|_{L^p(E)} \| \varphi \|_{L^{p'}(E)} \leq \frac {|E|^{1-\frac 1p}}{c|B(R)|} \| f - f_E \|_{L^p(E)}, \qquad 1 \leq p \leq \infty,
	\end{equation*}
	it follows that
	\begin{equation*}
		\left\| f - f_{\varphi} \right\|_{L^p(E)} \leq \bigg( 1 + \frac{|E|}{c |B(R)|} \bigg) \| f - f_E \|_{L^p(E)}.
	\end{equation*}
	This completes the proof.
\end{proof}
\begin{lemma}\label{LEM2}
	Let $B \in C^\infty(\bbR^3;\bbR^3)$ and $\Psi \in C^\infty(\bbR^3 ; \bbR^{3\times 3})$ satisfy $\nabla \cdot \Psi = B$ and \eqref{AS}. Let $R > 1$ and $\varphi \in C^\infty_c(B(R))$ satisfy $0 \leq \varphi \leq 1$, $\| \varphi \|_{L^1(B(R))} \geq c|B(R)|$, and $| \nabla \varphi | \leq C/R$ for some $c > 0$, $C>0$. Then we have
	\begin{equation}\label{2-2}
	\left| B_{\varphi} \right| \leq \frac C{c} R^{-\frac 23 - \frac 1s}.
	\end{equation}
\end{lemma}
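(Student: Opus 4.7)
The natural plan is to exploit the fact that $B = \nabla \cdot \Psi$ to trade a pointwise estimate on $B$ (which we don't have) for an oscillation estimate on $\Psi$ (which we do have by \eqref{AS}). Concretely, I would start by integrating by parts:
\begin{equation*}
\int_{\bbR^3} B\,\varphi \, \mathrm{d}x = \int_{\bbR^3} (\nabla\cdot\Psi)\, \varphi \, \mathrm{d}x = -\int_{\bbR^3} \Psi\cdot \nabla\varphi\, \mathrm{d}x.
\end{equation*}
Since $\varphi \in C^\infty_c(B(R))$ and $\Psi_{B(R)}$ is a constant matrix, the integral of $\Psi_{B(R)} \cdot \nabla\varphi$ vanishes, so I may replace $\Psi$ by $\Psi - \Psi_{B(R)}$:
\begin{equation*}
\int_{\bbR^3} B\,\varphi \, \mathrm{d}x = -\int_{B(R)} (\Psi - \Psi_{B(R)})\cdot \nabla\varphi\, \mathrm{d}x.
\end{equation*}
This substitution is the whole point; without it one could not use the BMO-type bound \eqref{AS}.

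Next I would apply H\"older's inequality with exponents $s$ and $s'=s/(s-1)$ on $B(R)$. Assumption \eqref{AS} gives
\begin{equation*}
\|\Psi - \Psi_{B(R)}\|_{L^s(B(R))} \leq C R^{\frac 13 - \frac 1s}\, |B(R)|^{\frac 1s} \leq C R^{\frac 13 + \frac 2s},
\end{equation*}
while $|\nabla\varphi|\leq C/R$ on $B(R)$ yields
\begin{equation*}
\|\nabla\varphi\|_{L^{s'}(B(R))} \leq \frac{C}{R}\,|B(R)|^{\frac 1{s'}} \leq C R^{2-\frac 3s}.
\end{equation*}
Multiplying the two bounds produces $\left|\int B\,\varphi\, \mathrm{d}x\right| \leq C R^{\frac 73 - \frac 1s}$.

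Finally, I divide by $\|\varphi\|_{L^1(B(R))} \geq c|B(R)| \sim c R^{3}$ to obtain
\begin{equation*}
|B_\varphi| \leq \frac{1}{c\,|B(R)|}\left|\int_{\bbR^3} B\,\varphi\, \mathrm{d}x\right| \leq \frac{C}{c}\, R^{-\frac 23 - \frac 1s},
\end{equation*}
which is exactly \eqref{2-2}. The computation is routine once the integration-by-parts/centering step is in place; the only mild obstacle worth flagging is recognizing that the lower bound $\|\varphi\|_{L^1}\gtrsim |B(R)|$ is what ties the estimate on $\int B\varphi$ to the average $B_\varphi$ with the correct $R$-scaling.
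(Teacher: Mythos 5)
Your proof is correct and follows essentially the same route as the paper: integrate by parts, center $\Psi$ by $\Psi_{B(R)}$ using $\int \nabla\varphi = 0$, and combine \eqref{AS} with $|\nabla\varphi|\leq C/R$ and the lower bound $\|\varphi\|_{L^1}\geq c|B(R)|$. The only cosmetic difference is that the paper uses the pointwise bound on $\nabla\varphi$ first and then passes from the $L^1$ to the $L^s$ mean of $\Psi-\Psi_{B(R)}$ by H\"older, whereas you apply H\"older directly with the dual pair $(s,s')$; the two computations are identical in substance.
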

\begin{proof}
	We clearly have that
	\begin{equation*}
		| B_{\varphi} | \leq \frac 1 {c|B(R)|} \bigg| \int_{B(R)} B \varphi \,\mathrm{d}x \bigg| = \frac 1 {c|B(R)|} \bigg| \int_{B(R)} \mathrm{div}\, \big( \Psi - \Psi_{B(R)} \big) \varphi \,\mathrm{d}x \bigg|.
	\end{equation*}
	Since integration by parts yields
	\begin{equation*}
		\frac 1 {c|B(R)|} \bigg| \int_{B(R)} \mathrm{div}\, \big( \Psi - \Psi_{B(R)} \big) \varphi \,\mathrm{d}x \bigg| \leq \frac 1 {c|B(R)|} \int_{B(R)} \big| \Psi - \Psi_{B(R)} \big| \big|\nabla \varphi\big| \,\mathrm{d}x ,
	\end{equation*}
	applying the H\"{o}lder's inequality with $|\nabla \varphi| \leq C/R$ we see that
	\begin{align*}
		\frac 1 {c|B(R)|} \bigg| \int_{B(R)} \mathrm{div}\, \big( \Psi - \Psi_{B(R)} \big) \varphi \,\mathrm{d}x \bigg| &\leq \frac {C}{c R} \bigg( \frac 1 {| B(R) |} \int_{B(R)} \big|\Phi - \Phi_{B(R)} \big| \,\mathrm{d}x \bigg) \\
		&\leq \frac {C}{c R} \bigg( \frac 1 {| B(R) |} \int_{B(R)} \big|\Phi - \Phi_{B(R)} \big|^s \,\mathrm{d}x \bigg)^{\frac 1s}.
	\end{align*}
	Thus, by \eqref{AS} we complete the proof.
\end{proof}

At the end of this section, we define a family of cut-off functions for simplicity. For $0<\tau<\tau'$, we let $\zeta=\zeta_{\tau,\tau'} \in C^\infty_c(B(\tau'))$ be a radially non-increasing scalar function such that $\zeta = 1$ on $B(\tau)$, $|\nabla \zeta | < 2/ (\tau'-\tau)$, and $|\nabla^2 \zeta | < 4/ (\tau'-\tau)^2$.

\section{Proof of Theorem~\ref{thm1}}

Firstly, we show that
\begin{equation}\label{EI}
	\int_{\bbR^3} (|\nabla u|^2 + |\nabla B|^2) \,\mathrm{d}x < \infty.
\end{equation}
Let $\varphi$ be a cut-off function in $C^\infty_c(\bbR^3)$. We multiply $\eqref{mhd_eq}_1$ and $\eqref{mhd_eq}_2$ by $u\varphi$ and $B\varphi$ respectively and integrate over $\bbR^3$. Then integration by parts with the divergence-free conditions yields that
\begin{equation}\label{zeta}
	\begin{gathered}
		\int_{\bbR^3} (|\nabla u|^2 + |\nabla B|^2)\varphi \,\mathrm{d}x = \frac 12 \int_{\bbR^3} (|u| ^2 + |B|^2) \Delta \varphi \,\mathrm{d}x + \frac 12 \int_{\bbR^3} (|u|^2 + |B|^2)u \cdot \nabla \varphi \,\mathrm{d}x \\
		- \int_{\bbR^3} (u \cdot B) (B \cdot \nabla) \varphi \,\mathrm{d}x + \int_{\bbR^3} \big(P - P_{B(\tau)}\big) u \cdot \nabla \varphi \,\mathrm{d}x,
	\end{gathered}
\end{equation}
where $\tau$ is any positive number. We set $R > \rho > 1$ and $\overline{R} = (R + \rho)/2$ and then take $\varphi = \zeta_{\rho, \overline{R}}$ in \eqref{zeta}. From the properties of $\zeta_{\rho, \overline{R}}$, H\"{o}lder's inequality, and Young's inequality we infer that
\begin{gather*}
	\int_{B(\rho)} (|\nabla u|^2 + |\nabla B|^2) \,\mathrm{d}x \leq C(R-\rho)^{-2} \int_{B(\overline{R})\setminus B(\rho)} (|u| ^2 + |B|^2) \,\mathrm{d}x \\
	+ C(R-\rho)^{-1} \int_{B(\overline{R})\setminus B(\rho)} (|u|^3 + |B|^3) \,\mathrm{d}x + C(R-\rho)^{-1} \int_{B(\overline{R})} \big|P - P_{B(\overline{R})}\big|^\frac 32 \,\mathrm{d}x.
\end{gather*}
Let $\psi = \zeta_{\overline{R}, R}$. Since $\psi = 1$ on $B(\overline{R})$, it follows
\begin{equation*}
	\int_{B(\rho)} (|\nabla u|^2 + |\nabla B|^2) \,\mathrm{d}x \leq C(I_1+I_2+I_3)
\end{equation*}
where
\begin{equation}\label{I123}
	\begin{aligned}
		I_1 &:= (R-\rho)^{-2} \int_{B(R)} (|\psi^2 u|^2 + |\psi^2 B|^2) \,\mathrm{d}x, \\
		I_2 &:= (R-\rho)^{-1} \int_{B(R)} (|\psi^3 u|^3 + |\psi^3 B|^3) \,\mathrm{d}x, \\
		I_3 &:= (R-\rho)^{-1} \int_{B(\overline{R})} \big|P - P_{B(\overline{R})} \big|^\frac 32 \,\mathrm{d}x.
	\end{aligned}
\end{equation}
To estimate $I_3$ first, we consider the functional $F \in W^{-1,\frac 32}(B(\overline{R}))$ such that
\begin {equation*}
	\langle F, \varphi \rangle = \int_{B(\overline{R})} (\nabla u - u\otimes u + B \otimes B) : \nabla \varphi \,\mathrm{d}x, \qquad \varphi \in W_0^{1,3}(B(\overline{R})).
\end{equation*}
Due to that $(u,B,P)$ solves the equation \eqref{mhd_eq}, we can verify $F =  -\nabla\big(P - P_{B(\overline{R})}\big)$ and $\langle F, \varphi \rangle = 0$ for $\varphi \in W_0^{1,3}(B(\overline{R}))$ with $\mathrm{div}\, \varphi = 0$. Thus, we can apply \cite[Lemma~2.1.1]{S01} and see that
\begin {equation*}
	\int_{B(\overline{R})} \big|P - P_{B(\overline{R})} \big|^\frac 32 \,\mathrm{d}x \leq C \| F \|_{W^{-1,\frac 32}(B(\overline{R}))}^{\frac 32}.
\end{equation*}
With the following estimate
\begin{align*}
	\| F \|_{W^{-1,\frac 32}(B(\overline{R}))}^{\frac 32} &\leq \| \nabla u - u \otimes u + B \otimes B \|_{L^{\frac 32}(B(\overline{R}))}^{\frac 32} \\
	&\leq CR^{\frac 34} \bigg( \int_{B(\overline{R})} |\nabla u|^2 \,\mathrm{d}x \bigg)^{\frac 34} + C \int_{B(\overline{R})} (|u|^3 + |B|^3) \,\mathrm{d}x
\end{align*}
we can obtain
\begin{equation*}
	I_3 \leq C R (R-\rho)^{-1} \bigg( \int_{B(R)} |\nabla u|^2 \,\mathrm{d}x \bigg)^{\frac 34} + C I_2.
\end{equation*}
Hence,
\begin{equation*}
	I_3 \leq \epsilon \int_{B(R)} |\nabla u|^2 \,\mathrm{d}x + C(\epsilon) R^{4}(R-\rho)^{4} + C I_2, \qquad \epsilon > 0.
\end{equation*}
Before estimating $I_2$, we notice that $\psi = \zeta_{\overline{R}, R}$ satisfies $0 \leq \psi \leq 1$ and $|\nabla \psi| \leq 4/(R-\rho)$, thus, with \eqref{AS}, Lemma~\ref{LEM3} is applicable. We use \eqref{C2} and have
\begin{gather*}
	(R-\rho)^{-1} \int_{B(R)} |\psi^3 u|^3 \,\mathrm{d}x \leq CR (R-\rho)^{-1} \| \psi \nabla u \|_{L^2(B(R))}^{\frac {18}{s+6}} + C R^{4-\frac 3s} (R - \rho)^{-4} \\
	+ C R(R-\rho)^{-1} \big( (R-\rho)^{-1} \| \psi^2 u \|_{L^2(B(R))} \big)^{\frac {18}{s+6}}.
\end{gather*}
By $R>1$, $R(R-\rho)^{-1}>1$, and Young's inequality, it follows
\begin{equation*}
	(R-\rho)^{-1} \int_{B(R)} |\psi^3 u|^3 \,\mathrm{d}x \leq \epsilon \| \psi \nabla u \|_{L^2(B(R))}^2 + C(\epsilon) R^{\frac {s+6}{s-3}}(R-\rho)^{-\frac {s+6}{s-3}} + I_1, \qquad \epsilon>0.
\end{equation*}
Repeating the above process for $B$ instead of $u$, we can have
\begin{equation*}
	(R-\rho)^{-1} \int_{B(R)} |\psi^3 B|^3 \,\mathrm{d}x \leq \epsilon \| \psi \nabla B \|_{L^2(B(R))}^2 + C(\epsilon) R^{\frac {s+6}{s-3}}(R-\rho)^{-\frac {s+6}{s-3}} + I_1, \qquad \epsilon>0.
\end{equation*}
Therefore, we obtain that
\begin{equation*}
	I_2 \leq \epsilon \int_{B(R)} (|\nabla u|^2 + |\nabla B|^2) \,\mathrm{d}x +C(\epsilon) R^{\frac {s+6}{s-3}}(R-\rho)^{-\frac {s+6}{s-3}} + I_1, \qquad \epsilon>0.
\end{equation*}
We continue estimating $I_1$. Using \eqref{C1} with $R>1$, we clearly have
\begin{align*}
	(R-\rho)^{-2} \int_{B(R)} |\psi^2 u|^2 \,\mathrm{d}x &\leq CR^{\frac {11}6 - \frac 1s} (R-\rho)^{-2} \| \psi \nabla u \|_{L^2(B(R))} + CR^{\frac {11}3 - \frac 2s} (R- \rho)^{-4} \\
	&\leq CR^2 (R-\rho)^{-2} \| \psi \nabla u \|_{L^2(B(R))} + CR^4 (R- \rho)^{-4}.
\end{align*}
Then by Young's inequality,
\begin{equation*}
	(R-\rho)^{-2} \int_{B(R)} |\psi^2 u|^2 \,\mathrm{d}x \leq \epsilon \| \psi \nabla u \|_{L^2(B(R))}^2 + C(\epsilon) R^4 (R- \rho)^{-4}, \qquad \epsilon > 0.
\end{equation*}
Similarly, we also obtain
\begin{equation*}
	(R-\rho)^{-2} \int_{B(R)} |\psi^2 B|^2 \,\mathrm{d}x \leq \epsilon \| \psi \nabla B \|_{L^2(B(R))}^2 + C(\epsilon) R^4 (R- \rho)^{-4}, \qquad \epsilon > 0,
\end{equation*}
thus,
\begin{equation*}
	I_1 \leq \epsilon \int_{B(R)} (|\nabla u|^2 + |\nabla B|^2) \,\mathrm{d}x + C(\epsilon) R^4 (R- \rho)^{-4}, \qquad \epsilon > 0.
\end{equation*}
Collecting the estimates for $I_1$, $I_2$, and $I_3$, we have with $R(R-\rho)^{-1}>1$ that
\begin{equation*}
	\int_{B(\rho)} (|\nabla u|^2 + |\nabla B|^2) \,\mathrm{d}x \leq \epsilon \int_{B(R)} (|\nabla u|^2 + |\nabla B|^2) \,\mathrm{d}x + C(\epsilon) R^{\frac {s+6}{s-3}}(R-\rho)^{-\frac {s+6}{s-3}}, \qquad \epsilon>0.
\end{equation*}
We fix $\epsilon <1$. Then, thanks to the iteration Lemma in \cite[V.Lemma~3.1]{G83}, we can deduce
\begin{equation*}
	\int_{B(\rho)} (|\nabla u|^2 + |\nabla B|^2) \,\mathrm{d}x \leq C R^{\frac {s+6}{s-3}}(R-\rho)^{-\frac {s+6}{s-3}}.
\end{equation*}
Taking $R = 2\rho$ and passing $\rho \rightarrow \infty$, we have \eqref{EI}.

Secondly, we show that
\begin{equation}\label{L3o1}
	r^{-1} \int _{B(2r) \setminus B(r)} (| u |^3 + | B |^3) \,\mathrm{d}x \to 0 \qquad \mbox{as} \qquad r\rightarrow \infty.
\end{equation}
Let $R>\rho>R/4>1$ and $\psi = \zeta_{\rho,R} - \zeta_{\rho/4, R/4}$. Since $\psi$ satisfies the assumptions for Lemma~\ref{LEM3}, with the assumption \eqref{AS} we use Lemma~\ref{LEM3} and have
\begin{gather*}
	\int_{B(R)} |\psi^3 u |^3 \,\mathrm{d}x \leq CR \| \psi \nabla u \|_{L^2(B(R))}^{\frac {18}{s+6}} + C R^{4-\frac 3s} (R - \rho)^{-3} \\
	+ C R \big( (R-\rho)^{-2} R^{\frac {11}6 - \frac 1s} \| \psi \nabla u \|_{L^2(B(R))} + R^{\frac {11}3 - \frac 2s} (R- \rho)^{-4} \big)^{\frac 9{s+6}}.
\end{gather*}
Applying Young's inequality, we have
\begin{gather*}
	\int_{B(R)} |\psi^3 u |^3 \,\mathrm{d}x \leq CR \| \psi \nabla u \|_{L^2(B(R))}^{\frac {18}{s+6}} + C R^{4-\frac 3s} (R - \rho)^{-3} \\
	+ C R \big( \| \psi \nabla u \|_{L^2(B(R))}^2 + R^{\frac {11}3 - \frac 2s} (R- \rho)^{-4} \big)^{\frac 9{s+6}}.
\end{gather*}
And by taking $\rho = 2r$ and $R = 4r$ for $r>1$, we can deduce that
\begin{equation*}
r^{-1} \int_{B(2r) \setminus B(r)} | u |^3 \,\mathrm{d}x \leq C \| \nabla u \|_{L^2(B(4r) \setminus B(r/2))}^{\frac {18}{s+6}} + C r^{-\frac 3s}.
\end{equation*}
Note that we can show the above inequality for $B$. Thus, we obtain \eqref{L3o1} due to \eqref{EI}.

Thirdly, we show
\begin{equation}\label{L3O1}
	r^{-1} \int _{B(r)} (| u |^3 +| B |^3) \,\mathrm{d}x \leq C, \qquad r > 1.
\end{equation}
By direct computation we see that
\begin{align*}
	r^{-1} \int_{B(r)} (| u |^3 +| B |^3) \,\mathrm{d}x &= \sum_{j=1}^\infty 2^{-j} (2^{-j}r)^{-1} \int_{B(2^{-(j-1)}r) \setminus B(2^{-j}r)} (| u |^3 +| B |^3) \,\mathrm{d}x \\
	&\leq \sup_{1/2 \leq \rho \leq r/2}\rho^{-1} \int _{B(2\rho) \setminus B(\rho)} (| u |^3 + | B |^3) \,\mathrm{d}x + \int_{B(1)} (| u |^3 + | B |^3) \,\mathrm{d}x.
\end{align*}
Hence, \eqref{L3o1} implies \eqref{L3O1}.

Now, we conclude $u \equiv B \equiv 0$. Let $r >1$. Inserting $\varphi = \zeta_{r,2r}$ into \eqref{zeta}, we infer that
\begin{gather*}
	\int_{B(r)} (|\nabla u|^2 + |\nabla B|^2) \,\mathrm{d}x \leq C r^{-2} \int_{B(2r)\setminus B(r)} (|u| ^2 + |B| ^2)\,\mathrm{d}x \\
+ C r^{-1} \!\int_{B(2r)\setminus B(r)} (|u|^3 + |B|^3)\,\mathrm{d}x + C r^{-1} \bigg( \!\int_{B(2r)} \big| P - P_{B(2r)} \big|^{\frac 32} \,\mathrm{d}x \!\bigg)^{\frac 23} \bigg( \!\int_{B(2r) \setminus B(r)} |u|^3 \,\mathrm{d}x \!\bigg)^{\frac 13}.
\end{gather*}
H\"{o}lder's inequality implies
\begin{equation*}
	r^{-2} \int_{B(2r)\setminus B(r)} (|u| ^2 + |B| ^2) \,\mathrm{d}x \leq C r^{-\frac 13} \bigg( r^{-1} \int_{B(2r)\setminus B(r)} (|u|^3 + |B|^3) \,\mathrm{d}x \bigg)^{\frac 23}.
\end{equation*}
By \eqref{L3O1} we have
\begin{equation*}
	r^{-2} \int_{B(2r)\setminus B(r)} (|u| ^2 + |B| ^2) \,\mathrm{d}x \to 0 \qquad \mbox{as} \qquad r \to \infty.
\end{equation*}
Also recall that \eqref{L3o1}
\begin{equation*}
	r^{-1} \int_{B(2r)\setminus B(r)} (|u|^3 + |B|^3)\,\mathrm{d}x \to 0 \qquad \mbox{as} \qquad r \to \infty.
\end{equation*}
As in the way we estimated $I_3$, we can show that
\begin{equation*}
	\int_{B(2r)} \big| P - P_{B(2r)} \big|^{\frac 32} \,\mathrm{d}x \leq Cr^{\frac 34} \bigg( \int_{B(2r)} |\nabla u|^2 \,\mathrm{d}x \bigg)^{\frac 34} + C \int_{B(2r)} (|u|^3 + |B|^3) \,\mathrm{d}x.
\end{equation*}
From \eqref{EI} and \eqref{L3O1} we see that
\begin{equation*}
	r^{-1} \int_{B(2r)} \big| P - P_{B(2r)} \big|^{\frac 32} \,\mathrm{d}x \leq C, \qquad r > 1.
\end{equation*}
Hence, it follows
\begin{equation*}
	r^{-1} \bigg( \int_{B(2r)} \big| P - P_{B(2r)} \big|^{\frac 32} \,\mathrm{d}x \bigg)^{\frac 23} \bigg( \int_{B(2r) \setminus B(r)} |u|^3 \,\mathrm{d}x \bigg)^{\frac 13} \leq C \bigg( r^{-1} \int_{B(2r) \setminus B(r)} |u|^3 \,\mathrm{d}x \bigg)^{\frac 13}.
\end{equation*}
And according to \eqref{L3o1}, we have
\begin{equation*}
	r^{-1} \bigg( \int_{B(2r)} \big| P - P_{B(2r)} \big|^{\frac 32} \,\mathrm{d}x \bigg)^{\frac 23} \bigg( \int_{B(2r) \setminus B(r)} |u|^3 \,\mathrm{d}x \bigg)^{\frac 13} \to 0 \qquad \mbox{as} \qquad r \to \infty.
\end{equation*}
This shows that
\begin{equation*}
	\int_{B(r)} (|\nabla u|^2 + |\nabla B|^2) \,\mathrm{d}x \to 0 \qquad \mbox{as} \qquad r \rightarrow \infty,
\end{equation*}
which implies that $u$ and $B$ must be constants. Thanks to \eqref{L3o1}, we finally obtain $u \equiv B \equiv 0$.

\section{Proof of Theorem~\ref{thm2}}

In this section, we use the notation
\begin{equation*}
	G(r) := \int_{B(r)} (|\nabla u|^2 + |\nabla B|^2) \,\mathrm{d}x
\end{equation*}
and
\begin{equation*}
	\Theta(r) := \frac 1 {r^{( \frac{2s}3 + 1 )( \frac 13 - \frac 1p )}} \bigg( \frac 1 {| B(r)|}\int_{B(r)} \big| B - B_{B(r)} \big|^p \,\mathrm{d}x \bigg)^{\frac 1p}
\end{equation*}
for $r>1$. Notice that by means of the condition \eqref{ASS}, it holds
\begin{equation}\label{ASSS}
	\lim_{r \rightarrow \infty} \Theta(r) = 0.
\end{equation}

We first show \eqref{EI}. For a cut-off function $\varphi \in C^\infty_c(\bbR^3)$, as we obtained \eqref{zeta} we have from \eqref{hall_mhd_eq} that
\begin{equation}\label{zeta2}
	\begin{gathered}
		\int_{\bbR^3} (|\nabla u|^2 + |\nabla B|^2)\varphi \,\mathrm{d}x = \frac 12 \int_{\bbR^3} (|u| ^2 + |B|^2) \Delta \varphi \,\mathrm{d}x + \frac 12 \int_{\bbR^3} (|u|^2 + |B|^2)u \cdot \nabla \varphi \,\mathrm{d}x \\
		- \int_{\bbR^3} (u \cdot B) (B \cdot \nabla) \varphi \,\mathrm{d}x - \int_{\bbR^3} ((\nabla \times B ) \times B) \cdot B \times \nabla \varphi \,\mathrm{d}x + \int_{\bbR^3} \big(P - P_{B(\tau)}\big) u \cdot \nabla \varphi \,\mathrm{d}x
	\end{gathered}
\end{equation}
for any $\tau>0$. Let $R > \rho > 1$ and $\overline{R} = (R + \rho)/2$ and insert $\varphi = \zeta_{\rho,\overline{R}}$ into \eqref{zeta2}. Then we can infer that
\begin{gather*}
	G(\rho) \leq C(R-\rho)^{-2} \int_{B(\overline{R})\setminus B(\rho)} (|u| ^2 + |B|^2) \,\mathrm{d}x + C(R-\rho)^{-1} \int_{B(\overline{R})\setminus B(\rho)} (|u|^3 + |B|^3) \,\mathrm{d}x \\
	+ C(R-\rho)^{-1} \int_{B(\overline{R})} \big|P - P_{B(\overline{R})}\big|^\frac 32 \,\mathrm{d}x + C(R-\rho)^{-1} \int_{B(\overline{R})} | \nabla B|| B|^2  \,\mathrm{d}x .
\end{gather*}
We define
\begin{equation*}
	I_4 := (R-\rho)^{-1} \int_{B(\overline{R})} | \nabla B|| B|^2  \,\mathrm{d}x .
\end{equation*}
With the notations \eqref{I123} we have that
\begin{equation*}
	\int_{B(\rho)} (|\nabla u|^2 + |\nabla B|^2) \,\mathrm{d}x \leq C(I_1+I_2+I_3+I_4).
\end{equation*}
In the proof of Theorem~\ref{thm1}, we already showed
\begin{equation}\label{I123}
	I_1 + I_2 + I_3 \leq \epsilon G(R) + C(\epsilon) R^{\frac {s+6}{s-3}}(R-\rho)^{-\frac {s+6}{s-3}}, \qquad \epsilon>0.
\end{equation}
We estimate $I_4$ by considering the cases $3<p\leq 4$ and $p > 4$ separately.

\medskip

\noindent \textit{(i) $3 < p \leq 4$ case:} 

\medskip

\noindent Using H\"{o}lder's inequality, we have
\begin{equation*}
	I_4 \leq (R-\rho)^{-1} \| B \|_{L^p(B(R))}^{\frac p{6-p}} \| B \|_{L^6(B(R))}^{\frac {3(4-p)}{6-p}} G(R)^{\frac 12}.
\end{equation*}
Let $\varphi = \zeta_{R/2,R}$. Then, since it is satisfied regardless of $R>1$ that $0 \leq \varphi \leq 1$, $\| \varphi \|_{L^1} \geq |B(R)|/8$, $|\nabla \varphi| \leq 4/R$, we can see by \eqref{2-1} and \eqref{2-2} that
\begin{equation*}
	\begin{aligned}
		\int_{B(R)} | B |^p \,\mathrm{d}x &\leq C \int_{B(R)} | B - B_{\varphi} |^p \,\mathrm{d}x + C |B_{\varphi}|^p |B(R)| \\
		&\leq C \int_{B(R)} \big| B - B_{B(R)} \big|^p \,\mathrm{d}x + C R^{-\frac {2p}3 - \frac ps + 3} .
	\end{aligned}
\end{equation*}
Thus, it follows
\begin{equation}\label{Lp_est}
	\int_{B(R)} | B |^p \,\mathrm{d}x \leq C R^{(\frac {2s}3 + 1)(\frac p3-1)+3}\Theta(R)^p + C R^{-\frac {2p}3 - \frac ps + 3}.
\end{equation}
Continuously, using $R^{(\frac {2s}3 + 1)(\frac p3-1)+3} = R^{6-p} R^{(\frac{2s}3 + 4)(\frac p3-1)}$ and $R^{-\frac {2p}3 - \frac ps + 3} \leq R^{6-p}$, we obtain
\begin{equation*}
	\int_{B(R)} |B|^p \,\mathrm{d}x \leq C R^{6-p} \big( R^{(\frac{2s}3 + 4)(\frac p3-1)}\Theta(R)^p + 1 \big).
\end{equation*}
As above we can also have
\begin{equation*}
	\int_{B(R)} |B|^6 \,\mathrm{d}x \leq C \int_{B(R)} \big| B - B_{B(R)} \big|^6 \,\mathrm{d}x + C R^{-1 - \frac 6s }.
\end{equation*}
By Poincaré-Sobolev inequality with $R^{-1-\frac 6s} \leq 1$,
\begin{equation}\label{PS}
	\int_{B(R)} |B|^6 \,\mathrm{d}x \leq C \bigg( \int_{B(R)} | \nabla B |^2 \,\mathrm{d}x \bigg)^{3} + C.
\end{equation}
Combining the above estimates, we can infer
\begin{equation*}
I_4 \leq C R (R - \rho)^{-1} \big( R^{(\frac{2s}3 + 4)(\frac p3-1)}\Theta(R)^p + 1 \big)^{\frac 1 {6-p}} \big( G(R) + 1 \big)^{\frac{3(4-p)}{2(6-p)}+\frac 12} .
\end{equation*}
Noting that
\begin{equation*}
	\frac{3(4-p)}{2(6-p)} + \frac 12  < 1, \qquad p>3,
\end{equation*}
with the Young's inequality and $R(R-\rho)^{-1}>1$ we deduce that
\begin{equation*}
	I_4 \leq \epsilon G(R) + C(\epsilon) R^{\frac{6-p}{p-3}} (R - \rho)^{-\frac{6-p}{p-3}} \big( R^{(\frac{2s}3 + 4)(\frac p3-1)}\Theta(R)^p + 1 \big)^{\frac 1 {p-3}}, \qquad \epsilon>0.
\end{equation*}

\medskip

\noindent \textit{(ii) $p \geq 4$ case:} 

\medskip

\noindent Using H\"{o}lder's inequality with $\psi = \zeta_{\overline{R},R}$, we have
\begin{equation*}
	I_4 \leq (R-\rho)^{-1} \| B \|_{L^p(B(R))}^{\frac p{2(p-3)}} \|  \psi^3 B \|_{L^3(B(R))}^{\frac {3(p-4)}{2(p-3)}} G(R)^{\frac 12}.
\end{equation*}
Due to $p \geq 4$, we can verify $R^{(\frac {2s}3 + 1)(\frac p3-1)+3}\leq R^{p-2}R^{(\frac{2s}3 + 4)(\frac p3-1)}$ and $R^{-\frac {2p}3 - \frac ps + 3} \leq R^{p-2}$. Applying it to \eqref{Lp_est} yields
\begin{equation*}
	\int_{B(R)} |B|^p \,\mathrm{d}x \leq C R^{p-2} \big( R^{(\frac{2s}3 + 4)(\frac p3-1)}\Theta(R)^p + 1 \big).
\end{equation*}
On the other hand, recalling the definition of $I_2$, we have
\begin{equation*}
	\int_{B(R)} |  \psi^3 B |^3\,\mathrm{d}x = (R-\rho) I_2.
\end{equation*}
Hence, with the Young's inequality we can infer that
\begin{align*}
	I_4 &\leq C R^{\frac {p-2}{2(p-3)}} (R-\rho)^{-\frac {p-2}{2(p-3)}} \big( R^{(\frac{2s}3 + 4)(\frac p3-1)}\Theta(R)^p + 1 \big)^{\frac 1{2(p-3)}} G(R)^{\frac 12} I_2^{\frac {p-4}{2(p-3)}} \\
	&\leq C R(R-\rho)^{-1} \big( R^{(\frac{2s}3 + 4)(\frac p3-1)}\Theta(R)^p + 1 \big)^{\frac 1{p-2}} G(R)^{\frac {p-3}{p-2}} + I_2.
\end{align*}
Noting that $(p-3)/(p-2) < 1$, we use Young's inequality again to have
\begin{equation*}
	I_4 \leq \epsilon G(R) + C(\epsilon) R^{p-2}(R-\rho)^{-(p-2)}\big( R^{(\frac{2s}3 + 4)(\frac p3-1)}\Theta(R)^p + 1 \big) + I_2, \qquad \epsilon>0.
\end{equation*}
Therefore, for any case we have
\begin{equation*}
	I_4 \leq \epsilon G(R) + C(\epsilon) R^{\beta} (R - \rho)^{-\beta} \big( R^{(\frac{2s}3 + 4)(\frac p3-1)}\Theta(R)^p + 1 \big)^{\gamma} + I_2, \qquad \epsilon>0
\end{equation*}
where
\begin{equation*}
	\beta := \max \Big\{ \frac {6-p}{p-3},p-2 \Big\}, \qquad \gamma := \max \Big\{ \frac 1{p-3},1 \Big\}.
\end{equation*}
Taking $\epsilon$ sufficiently small and redefining $\beta$ to satisfy $\beta \geq (s+6)/(s-3)$ also, we deduce
\begin{equation*}
	G(\rho) \leq \frac 12 G(R) + C R^{\beta}(R-\rho)^{-\beta}\big( R^{(\frac{2s}3 + 4)(\frac p3-1)}\Theta(R)^p +1 \big)^{\gamma}.
\end{equation*}
We set $r>1$ and $r \leq \rho < R \leq 2r$. Since we clearly have
\begin{equation*}
	\big( R^{(\frac{2s}3 + 4)(\frac p3-1)}\Theta(R)^p +1\big)^{\gamma} \leq C \bigg( r^{(\frac{2s}3 + 4)(\frac p3-1)} \sup_{\tau \in [r,2r]} \Theta(\tau)^p +1\bigg)^{\gamma},
\end{equation*}
employing the iteration Lemma in \cite[Lemma~3.1]{G83}, we can see
\begin{equation}\label{H1}
	G(\rho) \leq C R^{\beta} (R- \rho)^{-\beta}\bigg( r^{(\frac{2s}3 + 4)(\frac p3-1)} \sup_{\tau \in [r,2r]} \Theta(\tau)^p +1\bigg)^{\gamma}.
\end{equation}
And by letting $\rho = r$ and $R = 2r$, \eqref{ASSS} gives that
\begin{equation}\label{H2}
	G(r) \leq C \bigg( r^{(\frac{2s}3 + 4)(\frac p3-1)} \sup_{\tau \in [r,2r]} \Theta(\tau)^p +1\bigg)^{\gamma} \leq C r^{\gamma(\frac{2s}3 + 4)(\frac p3-1)}
\end{equation}
for sufficiently large $r>1$. To finish showing \eqref{EI}, we estimate $I_4$ in another way. Using H\"{o}lder's inequalities with $\psi = \zeta_{\overline{R},R}$, we can have
\begin{equation*}
	I_4 \leq (R-\rho)^{-1} \| B \|_{L^p(B(R))}^{\frac {3p}{sp-3s+3p}} \| B \|_{L^6(B(R))}^{\frac {sp-3s-3p+18}{sp-3s+3p}} \| \psi^3 B \|_{L^3(B(R))}^{\frac {sp-3s+6p-18}{sp-3s+3p}} G(R)^{\frac 12}.
\end{equation*}
We apply $R^{-\frac {2p}3 - \frac p2 + 3} \leq 1$ to \eqref{Lp_est} and obtain 
\begin{equation*}
	\| B \|_{L^p(B(R))} \leq C R^{(\frac {2s}3 + 1)(\frac 13-\frac 1p)+\frac 3p}\Theta(R) + C.
\end{equation*}
From \eqref{C1} and \eqref{C2} we see that
\begin{gather*}
	\| \psi^3 B \|_{L^3(B(R))} \leq CR^{\frac 13} G(R)^{\frac {3}{s+6}} + CR^{\frac 13+(\frac {11}{12}-\frac1{2s})\frac {6}{s+6}} (R-\rho)^{-\frac {6}{s+6}} G(R)^{\frac 3{2(s+6)}} \\
	+ C R^{\frac 43-\frac 1s} (R - \rho)^{-1}  + C R^{\frac 13+(\frac {11}6 - \frac 1s)\frac {6}{s+6}} (R- \rho)^{-\frac {12}{s+6}}.
\end{gather*}
Then using Young's inequality along with $R(R-\rho)>1$ and $R>1$, we deduce
\begin{equation*}
	\| \psi^3 B \|_{L^3(B(R))} \leq CR^{\frac 13} G(R)^{\frac {3}{s+6}} + C R^{\frac 13+\frac {12}{s+6}} (R- \rho)^{-\frac {12}{s+6}}.
\end{equation*}
Thus, with \eqref{PS} we have
\begin{gather*}
	I_4 \leq C (R-\rho)^{-1} \big( R^{(\frac {2s}3 + 1)(\frac 13-\frac 1p)+\frac 3p}\Theta(R) + R^{-\frac {2p}3 - \frac ps + 3}\big)^{\frac {3p}{sp-3s+3p}} \big( G(R)^{\frac 12} + 1\big)^{\frac {sp-3s-3p+18}{sp-3s+3p}} \\
	\big( R^{\frac 13} G(R)^{\frac {3}{s+6}} + R^{\frac 13+\frac {12}{s+6}} (R- \rho)^{-\frac {12}{s+6}} \big)^{\frac {sp-3s+6p-18}{sp-3s+3p}} G(R)^{\frac 12}.
\end{gather*}
We also set $r \leq \rho < R \leq 2r$ for $r>1$. By $R(R-\rho)>1$ again, we can rewrite it as
\begin{align*}
	I_4 &\leq C R(R-\rho)^{-1} \big( \Theta(R)^{\frac {3p}{sp-3s+3p}} + R^{-\frac {2sp+3p}{3s}} \big) \big(G(R) + R^{\frac {12}{s+6}} (R- \rho)^{-\frac {12}{s+6}} \big) \\
	&\leq C R(R-\rho)^{-1} \bigg( \sup_{\tau \in [r,2r]} \Theta(\tau)^{\frac {3p}{sp-3s+3p}} + r^{-\frac {2sp+3p}{3s}} \bigg) G(2r) + C R^{\frac {s+6}{s-3}} (R- \rho)^{-\frac {s+6}{s-3}}.
\end{align*}
We used in the last inequality that
\begin{equation*}
	\sup_{\tau \in [r,2r]} \Theta(\tau)^{\frac {3p}{sp-3s+3p}} + r^{-\frac {2sp+3p}{3s}} \leq 1
\end{equation*}
which is true for sufficiently large $r > 1$. Recalling \eqref{I123} and taking $\epsilon>0$ sufficiently small, we arrive at
\begin{equation*}
	G(\rho) \leq \frac 12 G(R) + C R^{\frac {s+6}{s-3}}(R-\rho)^{-\frac {s+6}{s-3}} + C R(R-\rho)^{-1} \bigg( \sup_{\tau \in [r,2r]} \Theta(\tau)^{\frac {3p}{sp-3s+3p}} + r^{-\frac {2sp+3p}{3s}} \bigg) G(2r).
\end{equation*}
According to the iteration Lemma in \cite[Lemma~3.1]{G83}, it follows that
\begin{equation*}
	G(\rho) \leq C R^{\frac {s+6}{s-3}}(R-\rho)^{-\frac {s+6}{s-3}} + C R(R-\rho)^{-1} \bigg( \sup_{\tau \in [r,2r]} \Theta(\tau)^{\frac {3p}{sp-3s+3p}} + r^{-\frac {2sp+3p}{3s}} \bigg) G(2r).
\end{equation*}
In particular, for $\rho=r$ and $R=2r$,
\begin{equation}\label{Y}
	G(r) \leq \kappa \bigg( \sup_{\tau \in [r,2r]} \Theta(\tau)^{\frac {3p}{sp-3s+3p}} + r^{-\frac {2sp+3p}{3s}} \bigg) G(2r) + \kappa
\end{equation}
for some $\kappa > 0$. We consider $r>1$ sufficiently large such that
\begin{equation*}
	\kappa \bigg( \sup_{\tau \in [r,2r]} \Theta(\tau)^{\frac {3p}{sp-3s+3p}} + r^{-\frac {2sp+3p}{3s}} \bigg) \leq 2^{-2 \gamma (\frac{2s}3 + 4 )(\frac p3-1)}.
\end{equation*}
Then iterating \eqref{Y} $n$-times yields
\begin{equation*}
	G(r) \leq 2^{-2n \gamma (\frac{2s}3 + 4 )(\frac p3-1)} G(2^n r) + \kappa \sum_{j =0}^{n-1} 2^{-2j \gamma (\frac{2s}3 + 4 )(\frac p3-1)}.
\end{equation*}
Thus, \eqref{H2} implies that
\begin{equation*}
	G(r) \leq 2^{-n \gamma (\frac{2s}3 + 4 )(\frac p3-1)} r^{\gamma (\frac{2s}3 + 4 )(\frac p3-1)} + C.
\end{equation*}
After letting $n \rightarrow \infty$, we pass $r \rightarrow \infty$, which shows \eqref{EI}. 

Following the way we obtained \eqref{L3o1} in the proof of Theorem~\ref{thm1}, we clearly have from \eqref{EI} the same conclusion, and \eqref{L3O1} also follows.

Now, we are ready to finish the proof. Let $r >1$ and $\varphi = \zeta_{r,2r}$. From \eqref{zeta2} we have
\begin{equation*}
	\begin{gathered}
		\int_{B(r)} (|\nabla u|^2 + |\nabla B|^2) \,\mathrm{d}x \leq Cr^{-2} \int_{B(2r)\setminus B(r)} (|u| ^2 + |B|^2) \,\mathrm{d}x \\
		+ Cr^{-1} \!\int_{B(2r)\setminus B(r)} (|u| ^3 + |B|^3) \,\mathrm{d}x + C r^{-1} \bigg( \!\int_{B(2r)} \big| P - P_{B(2r)} \big|^{\frac 32} \,\mathrm{d}x \!\bigg)^{\frac 23} \bigg( \!\int_{B(2r) \setminus B(r)} |u|^3 \,\mathrm{d}x \!\bigg)^{\frac 13} \\
		+ C r^{-1} \int_{B(2r) \setminus B(r)} | B |^2 | \nabla B |\,\mathrm{d}x.
	\end{gathered}
\end{equation*}
To obtain
\begin{equation*}
	\int_{B(r)} (|\nabla u|^2 + |\nabla B|^2) \,\mathrm{d}x \to 0 \qquad \mbox{as} \qquad r \rightarrow \infty,
\end{equation*}
it suffices to show that
\begin{equation*}
	r^{-1} \int_{B(2r) \setminus B(r)} | B |^2 | \nabla B |\,\mathrm{d}x \to 0\ \qquad r \to \infty.
\end{equation*}
Using H\"{o}lder's inequality yields
\begin{equation*}
	r^{-1} \int_{B(2r) \setminus B(r)} | B |^2 | \nabla B |\,\mathrm{d}x\leq C r^{- \frac 12} \bigg( \int_{B(2r)} | B |^6 \,\mathrm{d}x \bigg)^{\frac 13} \bigg( \int_{B(2r) \setminus B(r)} | \nabla B |^2\,\mathrm{d}x \bigg)^{\frac 12}.
\end{equation*}
By \eqref{PS} we have
\begin{equation*}
	r^{-1} \int_{B(2r) \setminus B(r)} | B |^2 | \nabla B |\,\mathrm{d}x\leq C r^{- \frac 12} \bigg\{ \bigg( \int_{B(2r)} | \nabla B |^2 \,\mathrm{d}x \bigg)^{3}+1 \bigg\}^{\frac 13} \bigg( \int_{B(2r) \setminus B(r)} | \nabla B |^2\,\mathrm{d}x \bigg)^{\frac 12}.
\end{equation*}
From \eqref{EI} we obtain what we desired. This completes the proof.\\
\ \\
\hspace{0.5cm}
$$\mbox{\bf Acknowledgements}$$
Chae's research  was partially supported by NRF grants 2021R1A2C1003234, and by the Chung-Ang University research grant in 2019.
Wolf has been { supported by }NRF grants 2017R1E1A1A01074536.
The authors declare that they have no conflict of interest.

\end{document}